\newtheorem{theorem}{Theorem}[section]
\newtheorem{lemma}[theorem]{Lemma}
\theoremstyle{definition}
\newtheorem{example}[theorem]{Example}
\theoremstyle{remark}
\newtheorem{remark}[theorem]{Remark}
\numberwithin{equation}{section}
\def\a{\alpha}
\def\P{{\mathbb P}}
\begin{document}

\title{\bf Correlation structure of time-changed fractional Brownian motion}

\author{Jebessa B. Mijena}
\address{Jebessa B. Mijena, 231 W. Hancock St, Campus Box 17, Department of Mathematics,
Georgia College \& State University, Milledgeville, GA 31061}
\email{jebessa.mijena@gcsu.edu}

\begin{abstract}
Fractional Brownian motion (fBm) is a centered self-similar Gaussian process with stationary increments, which depends on a parameter $H \in (0, 1)$ called the Hurst index. The use of time-changed processes in modeling often requires the knowledge of their second order properties such as covariance function. This paper provides the explicit expression for the correlation structure for time-changed fractional Brownian motion. 
 Several examples useful in applications are discussed.

\end{abstract}

\keywords{Brownian motion, Fractional Brownian motion, Correlation function, Generalized Mittag-Leffler function, inverse subordinator}

\maketitle

\section{Introduction}
One of the most important stochastic processes used in a variety of applications is the {\it Brownian motion} or {\it Wiener process} $B = \{B(t), t\geq 0\},$ which is a Gaussian process with zero mean and covariance function $\mbox{min}(s,t).$ The process $B$ has independent increments. Scientist used to model natural phenomena such as rainfall, river levels, temperature, as simple random walk processes or Brownian motion processes. However, during designing an optimal dam for the river Nile when hydrologist Hurst tried to model the river levels over the year as a Brownian motion process, he discovered to his surprise that the river level is not totally random. Instead the process increments have some vivid correlation, which indicates that the natural phenomena of river level fluctuation follows a biased random walk or fractional Brownian motion path more than that of a regular Brownian motion. Later, re-scaled range analysis revealed that this fact is true for several other natural processes including lake levels, rainfall, temperature, sunspot counts, and tree rings etc \cite{feder}. Researchers have applied fractional Brownian motion to a wide range of problems, such as particle diffusion, DNA sequences, bacterial colonies, geophysical data, electrochemical deposition, and stock market indicators \cite{addison}. In particular, computer science applications of fractional Brownian motion include modeling network traffic and generating graphical landscapes \cite{caglar}, \cite{mandelbort1}.

A Gaussian process $B_H = \{B_H(t), t\geq 0\}$ is called \textit{fractional Brownian motion} (fBm) of Hurst parameter $H\in (0,1)$ if it has mean zero and the covariance function
\begin{equation}\label{covfBm}
\mathbb{E}[B_H(t)B_H(s)] = \frac{\sigma^2}{2}(s^{2H} + t^{2H} - |t-s|^{2H}),
\end{equation}
where $\sigma^2 = \mbox{Var}(B_H(1)).$
This process was introduced by Kolmogorov \cite{kolmogorov-1} and studied by Mandelbrot and Van Ness in \cite{mandelbrot}, where a stochastic integral representation in terms of a standard Brownian motion was established. The parameter $H$ is called Hurst index from the statistical analysis, developed by the climatologist Hurst \cite{hurst}, of the yearly water run-offs of Nile river.\\

The fractional Brownian motion has the following properties.\\

1. {\it Self-similarity:} For any constant $a>0,$ the processes $\{a^{-H}B_H(at), t\geq 0\}$ and $\{B_H(t), t\geq 0\}$ have the same probability distribution. This property is an immediate consequence of the fact the covariance function \eqref{covfBm} is homogeneous of order $2H$, and it can be considered as a "fractal property" in probability.\\

2. {\it Stationary increments:} From \eqref{covfBm} it follows that the increment of the process in an interval $[s,t]$ has a normal distribution with mean zero and variance 
$$\mathbb{E}[(B_H(t) - B_H(s))^2] =\sigma^2 |t-s|^{2H}.$$
Hence, for any integer $k\geq 1$ we have
$$\mathbb{E}[(B_H(t)-B_H(s))^{2k}] = \frac{\sigma^2(2k)!}{k!2^k}|t-s|^{2Hk}.$$

For $H=1/2$, the covariance function can be written as $R_{1/2}(t, s) = \mbox{min}(s, t)$, and the process $B_{1/2}$ is an ordinary Brownian motion. In this case the increments of the process in disjoint intervals are independent. However, for $H\neq 1/2,$ the increments are not independent.

Since fractional Brownian motion $\{B_H(t), t \geq 0 \}$ has stationary increments, its increments $$X_j = B_H(j) - B_H(j-1), \ j = 1, \cdots,$$
form a stationary sequence. The sequence $\{X_j, j = 1, \cdots\}$ is called {\it fractional Gaussian noise} (FGN). It is a Gaussian stationary sequence with covariance function
\begin{eqnarray}
r(j) &=& \frac{\sigma_0^2}{2}\left(|j+1|^{2H} - 2|j|^{2H} + |j-1|^{2H}\right)\nonumber\\
&\sim& \sigma^2_0H(2H-1)j^{2H-2}, \ \ \ \mbox{as}\ \ \ j\rightarrow\infty,\nonumber
\end{eqnarray}
where $\sigma^2_0 = \mbox{Var}(X_j).$ $r(j)$ tends to $0$ as $j\rightarrow\infty$ for all $0<H<1$, when $1/2 < H < 1$ it tends to zero so slowly that $\sum_{j=1}^{\infty} r(j) = \infty$ diverges. We say that the sequence $\{X_j, j \in \mathbb{Z}\}$ exhibits {\it long-range dependence}. Moreover, this sequence presents an aggregation behavior which can be used to describe cluster phenomena. For $0< H < 1/2$, $\sum_{j=1}^{\infty}|r(j)| < \infty$ and $\sum_{j=1}^{\infty}r(j) = -\sigma^2_0/2$ \cite[Prop. 7.2.10]{samorodnitsk-taqqu}. Although there is no long-range dependence, the case $0 < H < 1/2$ is a singular one. Because the coefficient $H(2H-1)$ is negative, the $r(j)$ are negative for all $j,$ a behavior sometimes referred to as "negativee dependence." 

In this paper, we consider a class of intreated self-similar processes formed by subordinated fractional Brownian motion. Let $\{Y: Y(t), t\geq 0\}$ be the inverse subordinator (see section \eqref{moments} for its definition). Let $Z = \{Z(t), t\geq 0\}$ be a stochastic process defined as $Z(t) = B_H(Y(t)),\ t\geq 0.$ We call this intreated process {\it subordinated fractional Brownian motion}.

Since the sample paths of $B_H$ and $Y$ are continuous, the subordinated fBm also has continuous sample paths. However, $Z$ is non-Markovian, non-Gaussian and does not have stationary or independent increments. When $H=1/2$, we call the process $Z(t) = B_{1/2}(Y(t))$ subordinated Brownian motion. Its path properties were investigated in Magdziarz \cite{magdziarz}; Nane \cite{nane}.

The present paper computes the correlation function of $Z(t) = B_H(Y(t)),$ where the inner process is any inverse subordinator. Then in Section \eqref{application} the explicit formula is derived for the correlation function of time-changed processes that rise in applications.

\section{Moments of Increments}\label{moments}
Consider a non-decreasing L$\acute{\mbox{e}}$vy process $\{D(s), s\geq 0\},$ starting from $0$, which is continuous from the right with left limits. Such a process is called a subordinator. It has stationary and independent increments and is characterized by its Laplace transform
$$\mathbb{E}e^{-\lambda D(s)} = e^{-s\phi(\lambda)}, \lambda\geq 0.$$
where the Laplace exponent $\phi$ is a Bernstein function given by
\begin{equation}\label{laplaceexponent}
\phi(\lambda) = \mu\lambda + \int_{(0,\infty)}(1 - e^{-\lambda x})v(dx), \lambda\geq 0.
\end{equation}
where $\mu\geq 0$ is the drift and $v$ is a L$\acute{\mbox{e}}$vy measure on $\mathbb{R}^+\cup \{0\}$ which satisfies $\int_{0}^{\infty}(1\wedge x)v(d\,x) < \infty $\ (see \cite{Applebaum}). If the drift coefficient $\mu = 0,$ or if the L$\acute{\mbox{e}}$vy measure $v$ satisfies $v(0,\infty)=\infty,$ then $D$ is strictly increasing.

The first-passage time of a subordinator $\{D(s), s\geq 0\}$, is a new process $\{Y(t), t\geq 0\},$ called an \textit{inverse subordinator}, and is defined as follows:
\begin{equation}\label{inversesubordinator}
Y(t) = \mbox{inf}\{s: D(s) > t\},\ \ t\geq 0.
\end{equation}
We have $\{Y(t) < u\} = \{D(u) > t\}$ \cite[Eq.(3.2)]{mark2}.

In this section we compute moments of an increments of general inverse subordinator.

Before we calculate moments, we first argue that all moments of an inverse subordinator are finite. Notice that, for any $x > 0,$ we can bound the tail distribution of $Y$ using Markov's inequality:
\begin{equation}\label{taildistribution}
P[Y(t) > s]\leq P[D(s)\leq t] = P[e^{-xD(s)}\geq e^{-xt}]\leq e^{xt}\mathbb{E}e^{-xD(s)} = e^{xt-s\phi(x)},
\end{equation}
which implies $\mathbb{E}[Y(t)^\alpha] < \infty$ for any $\alpha > 0.$

Let $U^\gamma(t) = \mathbb{E}[Y(t)^\gamma]$. Using \cite[Eq. 3.13]{mark}                        
  the Laplace transform of $U^\gamma(t)$ with $\gamma > -1$ is given by:
\begin{eqnarray}\label{laplacetranformofinverse}
\widetilde{U^\gamma}(\lambda) &=& \int_{0}^{\infty}e^{-\lambda t}\int_{0}^{\infty}s^{\gamma}f_{Y(t)}(s)\, ds\, dt\nonumber\\
&=& \int_{0}^{\infty}s^{\gamma}\int_{0}^{\infty}e^{-\lambda t}f_{Y(t)}(s)\, dt\, ds\nonumber\\
&=&\frac{\phi(\lambda)}{\lambda}\int_{0}^{\infty}s^{\gamma}e^{-s\phi(\lambda)}\, ds\nonumber\\
&=&\frac{\Gamma(1 + \gamma)}{\lambda[\phi(\lambda)]^\gamma},
\end{eqnarray}
where
$f_{Y(t)}(s)$ is the probability density of   inverse subordinator.
Of particular importance
is the mean of $Y(t)$. 
From \eqref{laplacetranformofinverse}, $U$ has Laplace transform given by
\begin{equation}\label{laplaceofrenewalfunction}
\widetilde{U}(\lambda) = \frac{1}{\lambda\phi(\lambda)}.
\end{equation}
Thus, $U$ characterizes the inverse process $Y$, since $\phi$ characterizes $D$. For example, using \eqref{laplacetranformofinverse}, \cite[Eq. 23]{taqqu} and properties of Laplace transform it follows easily that the $\gamma$ moment of $Y$ is 
\begin{equation}\label{gammasmomentofY}
U^\gamma(t) = \gamma\int_{0}^{t}U^{\gamma-1}(t-y)dU(y),
\end{equation}
for $\gamma \ge 1.$
\\

The next Lemma extends the result of Lager$\mathring{\mbox{a}}$s \cite{lageras} to any moment of order $\kappa > 0.$
\begin{lemma}\label{momentsofincrements}
Let $Y(t),\ t\geq 0$ be the inverse subordinator given by \eqref{inversesubordinator}. Then for all $s, t > 0$ and all real numbers $\kappa > 0,$
\begin{equation}\label{momentformula}
\mathbb{E}\left[|Y(t) - Y(s)|^{\kappa}\right] = U^{\kappa}(\mbox{max}(t,s)) - \kappa\int_{0}^{\mbox{min}(t,s)}U^{\kappa-1}(\mbox{max}(t,s)-y)\, dU(y),
\end{equation}
where $U(x) = \mathbb{E}[Y(x)]$.
\end{lemma}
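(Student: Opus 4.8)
The plan is to reduce to the case $s\le t$ and then evaluate both sides of \eqref{momentformula} by showing that each equals the single expectation $\mathbb E\big[U^\kappa(t-D(Y(s)))\,\mathbf 1_{\{D(Y(s))\le t\}}\big]$, where $D$ is the subordinator inverting $Y$. Since the right-hand side of \eqref{momentformula} is symmetric under interchanging $s$ and $t$ and $Y$ is nondecreasing (being the inverse of the nondecreasing process $D$), I may assume $s\le t$, so that $\max(t,s)=t$, $\min(t,s)=s$ and $|Y(t)-Y(s)|=Y(t)-Y(s)=:W\ge0$. Throughout I will use three elementary facts: the layer-cake identity $\mathbb E[W^\kappa]=\kappa\int_0^\infty a^{\kappa-1}\mathbb P(W>a)\,da$, valid for every real $\kappa>0$; the duality $\{Y(t)\ge u\}=\{D(u)\le t\}$ (the complement of the relation recorded just after \eqref{inversesubordinator}); and the fact that $dU$ is the $0$-potential (renewal) measure of $D$, i.e.\ $\int_{[0,x]}g\,dU=\mathbb E\!\int_0^\infty g(D(u))\,\mathbf 1_{\{D(u)\le x\}}\,du$, which follows from $U(x)=\mathbb E[Y(x)]=\int_0^\infty\mathbb P(D(u)\le x)\,du$.

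For the left-hand side, the layer cake together with the event identity $\{W>a\}=\{Y(t)>Y(s)+a\}=\{D(Y(s)+a)\le t\}$ gives $\mathbb E[W^\kappa]=\kappa\int_0^\infty a^{\kappa-1}\mathbb P(D(Y(s)+a)\le t)\,da$. I would then apply the strong Markov property of $D$ at the first-passage time $\tau=Y(s)$: conditionally on $\mathcal F_\tau$ the increment process $a\mapsto D(\tau+a)-D(\tau)$ is an independent copy of $D$, so $\mathbb P(D(\tau+a)\le t\mid\mathcal F_\tau)=\mathbb P(D(a)\le t-D(\tau))$. Integrating in $a$ and recognizing the direct moment formula $U^\kappa(x)=\kappa\int_0^\infty a^{\kappa-1}\mathbb P(D(a)\le x)\,da$ (itself the layer cake applied to $Y(x)^\kappa$ via the duality, valid for all $\kappa>0$) yields $\mathbb E[W^\kappa]=\mathbb E[U^\kappa(t-D(Y(s)))\,\mathbf 1_{\{D(Y(s))\le t\}}]$.

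For the right-hand side, I would first use the convolution identity \eqref{gammasmomentofY} to write $U^\kappa(t)=\kappa\int_0^t U^{\kappa-1}(t-y)\,dU(y)$, so that the claimed expression collapses to $\kappa\int_s^t U^{\kappa-1}(t-y)\,dU(y)$. Writing this integral against the potential measure as $\kappa\,\mathbb E\!\int_{Y(s)}^\infty U^{\kappa-1}(t-D(u))\,\mathbf 1_{\{D(u)\le t\}}\,du$ (using $\{D(u)>s\}=\{u\ge Y(s)\}$) and again invoking the strong Markov property at $\tau=Y(s)$, the inner integral reduces, by the potential-measure identity and \eqref{gammasmomentofY} once more, to $\int_{[0,x]}U^{\kappa-1}(x-y)\,dU(y)=\tfrac1\kappa U^\kappa(x)$ with $x=t-D(\tau)$. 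This produces exactly $\mathbb E[U^\kappa(t-D(Y(s)))\,\mathbf 1_{\{D(Y(s))\le t\}}]$, matching the left-hand side and proving \eqref{momentformula}.

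The main obstacle, and the point where this argument departs from Lager\aa s's integer-moment computation, is the non-integer exponent: one cannot expand $W^\kappa$ by the multinomial/simplex method, so the layer-cake representation must carry the whole argument, and the identity \eqref{gammasmomentofY} must be available for all real $\kappa>0$ rather than only $\kappa\ge1$. I would secure the latter through the Laplace-transform computation behind \eqref{laplacetranformofinverse}: the transform of $\kappa\int_0^t U^{\kappa-1}(t-y)\,dU(y)$ equals $\kappa\,\Gamma(\kappa)\,\lambda^{-1}[\phi(\lambda)]^{-(\kappa-1)}\cdot[\phi(\lambda)]^{-1}=\Gamma(\kappa+1)\,\lambda^{-1}[\phi(\lambda)]^{-\kappa}=\widetilde{U^\kappa}(\lambda)$, so uniqueness of Laplace transforms extends the convolution identity to every $\kappa>0$. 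A secondary technical point is the careful treatment of the overshoot $D(Y(s))\ge s$ at the first-passage time and of the boundary event $\{Y(t)=Y(s)+a\}$ in the event identity; both are harmless because, by right-continuity of $D$, they contribute only Lebesgue-null sets in the $a$-integration.
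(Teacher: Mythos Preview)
Your argument is correct and takes a genuinely different route from the paper. The paper proceeds analytically: it writes $\mathbb E\big[|Y(t)-Y(s)|^\kappa\big]$ as a bivariate Lebesgue--Stieltjes integral against the joint law of $(Y(t),Y(s))$, applies the bivariate integration-by-parts formula of Gill--van der Laan--Wellner, splits the resulting double integral over the regions $\{u<v\}$, $\{u=v\}$, $\{u>v\}$, and then identifies the surviving piece by passing to Laplace transforms in the time variable and inverting. Your approach is probabilistic instead: you exploit the strong Markov property of the subordinator $D$ at its first-passage time $\tau=Y(s)$ to recognize both sides of \eqref{momentformula} as the single quantity $\mathbb E\big[U^\kappa(t-D(\tau))\,\mathbf 1_{\{D(\tau)\le t\}}\big]$, reaching it on the left via the layer-cake formula and on the right via the identification of $dU$ with the $0$-potential measure of $D$ together with the convolution identity \eqref{gammasmomentofY} (which you correctly extend to all $\kappa>0$ by the Laplace-transform argument). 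Your route is shorter and more transparent---it makes the renewal/regenerative structure behind the formula explicit and yields a pleasant intermediate expression in terms of the overshoot $D(Y(s))$---whereas the paper's route avoids invoking the strong Markov property or the potential-measure representation, trading these for a heavier but more self-contained analytic computation. One small remark: your extension of \eqref{gammasmomentofY} to $0<\kappa<1$ uses that $U^{\kappa-1}(t)=\mathbb E[Y(t)^{\kappa-1}]$ is finite with Laplace transform given by \eqref{laplacetranformofinverse}; this is exactly what the paper asserts in stating \eqref{laplacetranformofinverse} for all $\gamma>-1$, so the step is justified.
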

\begin{proof}
Since $Y(t)$ is nondecreasing the case $\kappa = 1$ is trivial. In the remaining case, we consider $\kappa>0$ and $\kappa \neq 1.$ Write
\begin{equation}\label{momentsform}\mathbb{E}\left[|Y(t) - Y(s)|^\kappa\right] = \int_{0}^{\infty}\int_{0}^\infty |u-v|^\kappa H(du,dv),\end{equation}
a Lebesgue-Stieltjes integral with respect to the bivariate distribution function $H(u, v) :=\mathbb{P}[Y(t)\leq u, Y(s)\leq v]$ of the process $Y(t).$

To compute the integral in \eqref{momentsform}, we use the bivariate integration by parts formula \cite[Lemma 2.2]{gill}
\begin{eqnarray}
\int_0^a\int_0^b G(u,v)H(du, dv) &=& \int_0^a\int_0^b H([u, a]\times [v, b])G(du, dv)\nonumber\\ &+& \int_0^a H([u,a]\times(0,b])G(du,0)\nonumber\\ &+& \int_0^b H((0, a]\times [v,b])G(0, dv)\nonumber\\ &+& G(0,0)H((0, a]\times (0,b]),
\end{eqnarray}
with $G(u,v) = |u-v|^\kappa,$ and the limits of integration $a$ and $b$ are infinite:
\begin{eqnarray}
\int_0^\infty\int_0^\infty G(u,v)H(du, dv) &=& \int_0^\infty\int_0^\infty H([u, \infty]\times [v, \infty])G(du, dv)\nonumber\\ &+& \int_0^\infty H([u,\infty]\times(0,\infty])G(du,0)\nonumber\\ &+& \int_0^\infty H((0, \infty]\times [v,\infty])G(0, dv)\nonumber\\ &+& G(0,0)H((0, \infty]\times (0,\infty])\nonumber\end{eqnarray}
\begin{eqnarray}\label{corrfunction2}
&=&\int_0^\infty\int_0^\infty \mathbb{P}[Y(t)\geq u, Y(s)\geq v]G(du, dv)+\int_0^{\infty}\mathbb{P}[Y(t)\geq u]G(du,0)\nonumber\\ &+&\int_0^{\infty}\mathbb{P}[Y(s)\geq v]G(0,dv),\label{intbypartsresult}
\end{eqnarray}
since $Y(t)>0$ with probability $1$ for all $t>0.$ 
 Notice that $G(du,v) = g_v(u)\,du$ for all $v\geq 0,$ where 
\begin{equation}\label{gfunction}
g_v(u) = \kappa(u-v)^{\kappa - 1}I\{u > v\} + -\kappa (v-u)^{\kappa - 1}I\{u\leq v\}.
\end{equation}
In what follows we use notation for  the density of the inverse subordinator $Y(t)$ as $f_{Y(t)}(u)=f_t(u).$

Integrate by parts to get
\begin{eqnarray}
\int_0^\infty\mathbb{P}[Y(t)\geq u]G(du, 0)&=&\int_0^\infty (1-\mathbb{P}[Y(t)<u])\kappa\,u^{\kappa -1}\,du\nonumber\\
&=&\left[u^\kappa\mathbb{P}[Y(t)\geq u]\right]_0^\infty + \int_0^\infty u^\kappa f_t(u)\,du\nonumber\\
&=& \mathbb{E}[Y(t)^\kappa].
\end{eqnarray}
Similarly, $$\int_0^{\infty}\mathbb{P}[Y(s)\geq v]G(0, dv) = \int_0^\infty  v^{\kappa}f_s(v)\,dv = \mathbb{E}[Y(s)^\kappa],$$
and hence \eqref{intbypartsresult} reduces to
\begin{equation}\label{genform1}
\int_0^\infty\int_0^\infty G(u, v)H(du, dv) = I + \mathbb{E}[Y(t)^\kappa] +  \mathbb{E}[Y(s)^\kappa],
\end{equation}
where $$I = \int_0^\infty\int_0^\infty \mathbb{P}[Y_t\geq u, Y_s\geq v]G(du,dv).$$
Assume (without loss of generality) that $t\geq s$. Then $Y_t\geq Y_s$, so $\mathbb{P}[Y(t)\geq u, Y(s)\geq v] = \mathbb{P}[Y(s)\geq v]$ for $u\leq v.$ Write $I = I_1 + I_2 + I_3,$ where
\begin{eqnarray}
I_1:= \int_{u < v}\mathbb{P}[Y(t)\geq u, Y(s)\geq v]G(du,dv) = \int_{u<v}\mathbb{P}[Y(s)\geq v]G(du,dv)\nonumber\\ I_2:= \int_{u = v}\mathbb{P}[Y(t)\geq u, Y(s)\geq v]G(du,dv) = \int_{u=v}\mathbb{P}[Y(s)\geq v]G(du,dv)\nonumber\\ I_3:= \int_{u \geq v}\mathbb{P}[Y(t)\geq u, Y(s)\geq v]G(du,dv).\nonumber \ \  \ \ \ \ \ \  \ \ \ \ \ \  \ \ \ \ \ \ \ \ \ \ \ \ \ \ \ \  \ \ \ \  \ \nonumber
\end{eqnarray}
Since $G(du,dv) = -\kappa(\kappa-1)(v-u)^{\kappa -2}\,du\,dv$ for $u < v$, we may write
\begin{eqnarray}\label{I1part}
I_1 &=& -\kappa(\kappa-1)\int_{v=0}^\infty\int_{u=0}^v \mathbb{P}[E(s)\geq v](v-u)^{\kappa -2}\,du\,dv\nonumber\\ &=&-\kappa\int_{v=0}^\infty \mathbb{P}[E(s)\geq v]\, v^{\kappa - 1}\,dv\nonumber\\ &=& -\mathbb{E}[Y(s)^\kappa],
\end{eqnarray}
using the well-known formula $\mathbb{E}[X^\kappa] = \kappa\int_0^\infty x^{\kappa -1}\mathbb{P}[X\geq x]\,dx$ for any positive random variable.

 Since $G(du,v) = g_v(u)\,du$, where the function \eqref{gfunction} has no jump at the point $u=v$, we also have

\begin{equation}\label{I2part}
I_2 = \int_{u=v}\mathbb{P}[Y(s)\geq v]G(du,dv) = 0.
\end{equation}
Since $G(du,dv) = -\kappa(\kappa-1)(u-v)^{\kappa -2}\,du\ dv$ for $u>v$ as well, we have
\begin{eqnarray}\label{part3}
I_3 = -\kappa(\kappa-1)\int_{v=0}^\infty\mathbb{P}[Y(t)\geq u, Y(s)\geq v]\int_{u=v}^\infty (u-v)^{\kappa-2}\, du\, dv.
\end{eqnarray}
Next, we obtain an expression for $\mathbb{P}[Y(t)\geq u, Y(s)\geq v].$ Since the process $Y(t)$ is inverse to the stable subordinator $D(u),$ we have $\{Y(t)> u\}=\{D(u)<t\}$ \cite[Eq. (3.2)]{mark2}, and since $Y(t)$ has a density, it follows that $\mathbb{P}[Y(t)\geq u, Y(s)\geq v] = \mathbb{P}[D(u)<t, D(v)<s]$ (see \cite[Proposition A.2]{taqqu}). Let $g(x,u)$ be a density function of a random variable $D(u)$. Since $D(u)$ has stationary independent increments, it follows that
\begin{eqnarray}
\P[Y(t)\geq u, Y(s) \geq v]& =& \mathbb{P}[D(u)<t, D(v)<s]\nonumber\\
&=& \mathbb{P}[(D(u) - D(v)) + D(v) < t, D(v) <s]\nonumber\\
&=&\int_{y=0}^sg(y,v)\int_{x=0}^{t-y}g(x, u -v)\, dx\,dy,
\end{eqnarray}
substituting the above expression into \eqref{part3} and using the Fubini Theorem, it follow that
\begin{eqnarray}\label{part3estimate}
I_3&=& -\kappa(\kappa-1)\int_{y=0}^s\int_{x=0}^{t-y}\int_{v=0}^\infty g(y,v)\ dv\int_{u=v}^\infty g(x,u-v)(u-v)^{\kappa-2}du\ dx\ dy\nonumber\\
&=&-\kappa(\kappa-1)\int_{y=0}^s\int_{x=0}^{t-y}\int_{v=0}^\infty g(y,v)\ dv\int_{z=0}^\infty g(x,z)z^{\kappa-2}dz\,dx\,dy.
\end{eqnarray}
Let $h(y) = \int_{v=0}^\infty g(y,v)\ dv$ and $k(x) = \int_{z=0}^\infty g(x,z)z^{\kappa -2}\ dz.$ So, the Laplace transform of $h(y)$ is given by
\begin{eqnarray}
\mathcal{L}(h(y);\lambda) = \int_{y=0}^\infty e^{-\lambda y}h(y)\ dy &= & \int_{y=0}^\infty e^{-\lambda y}\int_{v=0}^\infty g(y,v)\,dv\,dy\nonumber\\
&=&\int_{y=0}^\infty \int_{v=0}^\infty e^{-\lambda y}g(y,v)\,dy\,dv\nonumber\\
&=&\int_{v=0}^\infty e^{-v\phi(\lambda)}\ dv\nonumber\\
&=&\frac{1}{\phi(\lambda)} = \mathcal{L}(U'(y);\lambda).\label{laplaceofh}
\end{eqnarray}
Similarly, take the Laplace transform of $k(x):$
\begin{eqnarray}
\mathcal{L}(k(x);\lambda) &=& \int_{x=0}^\infty e^{-\lambda x}\int_{z=0}^\infty  z^{\kappa-2}g(x,z)\,dz\,dx\nonumber\\
&=&\int_{z=0}^\infty z^{\kappa-2}\int_{x=0}^\infty e^{-\lambda x}g(x,z)\,dx\,dz\nonumber\\
&=&\int_{z=0}^\infty z^{\kappa-2} e^{-z\psi(\lambda)}dz\nonumber\\
& =& \frac{\Gamma(\kappa-1)}{(\phi(\lambda))^{\kappa-1}}. 
\end{eqnarray}
So, using properties of Laplace transform we get
\begin{equation}\label{estimateofkfunc}
\mathcal{L}\left((\kappa - 1)\int_{0}^{t}k(x)\, dx; \lambda\right) = \frac{\Gamma(\kappa)}{\lambda(\phi(\lambda))^{\kappa-1}} = \mathcal{L}\left(U^{\kappa-1}(t);\lambda\right).
\end{equation}
where we have used \eqref{laplacetranformofinverse}. Using the uniqueness theorem of the Laplace transform, \eqref{laplaceofh} and \eqref{estimateofkfunc}, we have
\begin{eqnarray}\label{I3part}
I_3 =
-\kappa\int_{y=0}^{s}U^{\kappa-1}(t-y)\ dU(y).
\end{eqnarray}
Now it follows using \eqref{genform1}, \eqref{I1part}, \eqref{I2part} and \eqref{I3part} that
\begin{eqnarray}
\mathbb{E}[|Y(t) - Y(s)|^\kappa]
&=&\int_{0}^{\infty}\int_{0}^{\infty}|u-v|^{\kappa}H(du, dv)\nonumber\\
&=&I_1 + I_2 + I_3 +  \mathbb{E}[Y(t)^\kappa] +  \mathbb{E}[Y(s)^\kappa]\nonumber\\
&=&-\mathbb{E}[Y(s)^\kappa]-\kappa\int_{y=0}^{s}U^{\kappa-1}(t-y)\ dU(y)\nonumber\\
&+& \mathbb{E}[Y(t)^\kappa] +  \mathbb{E}[Y(s)^\kappa]\nonumber\\
&=& \mathbb{E}[Y(\mbox{max}(t,s))^\kappa]-\kappa\int_{y=0}^{\mbox{min}(t,s)}U^{\kappa-1}(\mbox{max}(t,s)-y)\ dU(y),\label{momentsofincrement}\nonumber
\end{eqnarray}
which agrees with \eqref{momentformula}.
\end{proof}
\begin{remark}\label{covarianceofY}
Using the fact $xy = (x^2 + y^2 -(x-y)^2)/2,$ for $t\geq s > 0$ we get
\begin{equation}
\mathbb{E}[Y(t)Y(s)] = \frac{1}{2}U^2(s) + \int_0^sU(t-y)d\,U(y),
\end{equation}
and
\begin{eqnarray}
\mbox{Cov}[Y(t), Y(s)] &=& \mathbb{E}[Y(t)Y(s)]-U(t)U(s)\nonumber\\
&=& \frac{1}{2}U^2(s) + \int_{0}^{s}U(t-y)dU(y)- U(t)U(s),
\end{eqnarray}
which is equivalent to \cite[Corollary 4.3]{taqqu}.
\end{remark}
\begin{remark}
Let $Z(t) = B_H(Y(t))$. For any positive real number $m$ such that $mH > 0$ 
 using the facts that the inverse subordinator has non-decreasing paths and $B_H$ is $H$ self-similar with stationary increments. Conditioning on $Y$ we arrive at
\begin{eqnarray}\label{moementofincrementZ(t)}
\mathbb{E}\left[|Z(t) - Z(s)|^m\right] &=& \mathbb{E}\left[|B_H(1)|^m\right]\mathbb{E}\left[|Y(t) - Y(s)|^{mH}\right]\nonumber\\
&=& \frac{(2\sigma^2)^{m/2}}{\sqrt{\pi}}\Gamma\left(\frac{m+1}{2}\right) \mathbb{E}\left[|Y(t) - Y(s)|^{mH}\right]
\end{eqnarray}
since $B_H(1)$ has normal distribution with mean zero and standard deviation $\sigma$.
\end{remark}
\section{Correlation function}
In this section, we prove a general result that can be used to compute the correlation function of a time-changed fractional Brownian motion $Z(t) = B_H(Y(t))$ where $B_H, Y$ are independent, and $Y$ is a general inverse subordinator which is non-Markovian with non-stationary and non-independent increments.
With the above Lemma it is easy to obtain covariance function for time-changed processes $Z(t)$. The following theorem give the covariance function.
\begin{theorem}\label{maintheorem1}
Let $B_{H} = \{B_H(t), t\in \mathbb{R}\} $ be the fractional Brownian motion of index $H\in(0,1)$. Then the covariance function of the corresponding time-changed fractional Brownian motion $Z(t) = B_H(Y(t)),$ where $Y(t)$ is an independent inverse  subordinator \eqref{inversesubordinator} of $D(t)$ with Laplace exponent $\phi(\lambda)$ and $t,s\geq 0$, is given by
\begin{equation}\label{maintheoremstatement}
\mbox{Cov}\left(Z(t), Z(s)\right) = \frac{\sigma^2}{2}\left\{ U^{2H}(\mbox{min}(t,s))+ 2H\int_{0}^{\mbox{min}(t,s))}U^{2H-1}(\mbox{max}(t,s)-y)dU(y)\right\}
\end{equation}
where $\sigma^2 = \mbox{Var}(B_H(1)).$
\end{theorem}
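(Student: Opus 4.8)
The plan is to reduce the covariance computation directly to the moment formula of Lemma~\ref{momentsofincrements} by conditioning on the inner process $Y$. Since $B_H$ has mean zero and $Y$ is independent of $B_H$, the time-changed process $Z(t) = B_H(Y(t))$ also has mean zero, so that $\mathrm{Cov}(Z(t), Z(s)) = \mathbb{E}[Z(t)Z(s)]$. The whole theorem is then a matter of evaluating this second moment.

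First I would condition on the entire path of $Y$. Because $B_H$ and $Y$ are independent, once $Y(t)$ and $Y(s)$ are frozen as deterministic arguments the inner conditional expectation is exactly the fBm covariance \eqref{covfBm} evaluated at those arguments:
\[
\mathbb{E}\big[B_H(Y(t))B_H(Y(s)) \,\big|\, Y\big] = \frac{\sigma^2}{2}\Big(Y(t)^{2H} + Y(s)^{2H} - |Y(t) - Y(s)|^{2H}\Big).
\]
Taking expectation over $Y$ and writing $U^{2H}(x) = \mathbb{E}[Y(x)^{2H}]$ as in \eqref{laplacetranformofinverse}, I obtain
\[
\mathbb{E}[Z(t)Z(s)] = \frac{\sigma^2}{2}\Big(U^{2H}(t) + U^{2H}(s) - \mathbb{E}[|Y(t)-Y(s)|^{2H}]\Big).
\]

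Next I would invoke Lemma~\ref{momentsofincrements} with $\kappa = 2H \in (0,2)$ to expand the last term. Assuming without loss of generality that $t \geq s$, so that $\max(t,s)=t$ and $\min(t,s)=s$, the lemma gives $\mathbb{E}[|Y(t)-Y(s)|^{2H}] = U^{2H}(t) - 2H\int_0^s U^{2H-1}(t-y)\,dU(y)$. Substituting this into the previous display, the two $U^{2H}(t)$ contributions cancel and I am left with $\tfrac{\sigma^2}{2}\big\{U^{2H}(s) + 2H\int_0^s U^{2H-1}(t-y)\,dU(y)\big\}$, which is precisely \eqref{maintheoremstatement} upon recognizing $s=\min(t,s)$ and $t=\max(t,s)$.

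The derivation is essentially mechanical once Lemma~\ref{momentsofincrements} is available, so the only genuinely delicate point is the justification of the conditioning step. To apply the tower property I must know that $\mathbb{E}|Z(t)Z(s)|$ is finite; this follows from the finiteness of all moments of the inverse subordinator established via the tail estimate \eqref{taildistribution}, combined with the fact that $B_H(1)$ is Gaussian (so $\mathbb{E}|B_H(1)|^m<\infty$ for every $m$, as already used in \eqref{moementofincrementZ(t)}). I would also remark that, unlike the proof of the moment lemma itself, this step uses neither monotonicity nor any distributional structure of $Y$ beyond independence from $B_H$ and finite moments, so the covariance formula holds for an arbitrary inverse subordinator.
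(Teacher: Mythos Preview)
Your proposal is correct and follows essentially the same approach as the paper: condition on $Y$ to reduce to the fBm covariance, then invoke Lemma~\ref{momentsofincrements} with $\kappa=2H$ so that the $U^{2H}(\max(t,s))$ terms cancel. The only cosmetic difference is that the paper re-derives the fBm covariance from self-similarity and stationarity of increments rather than citing \eqref{covfBm} directly, and your version is slightly more careful in justifying the integrability needed for the tower property.
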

\begin{proof}
Using the fact $B_H$ is $H$ self-similar with stationary increments for any $t, s\in \mathbb{R}$ we get
\begin{eqnarray}\mathbb{E}[B_H(t)B_H(s)] &=& \frac{1}{2}\left\{\mathbb{E}[B_H^2(t)] + \mathbb{E}[B_H^2(s)] - \mathbb{E}[(B_H(t)-B_H(s))^2]\right\}\nonumber\\
&=& \frac{1}{2}\left\{\mathbb{E}[B_H^2(t) + \mathbb{E}[B_H^2(s)] - \mathbb{E}[(B_H(t-s)-B_H(0))^2]\right\}\nonumber\\
&=&\frac{1}{2}\left\{|t|^{2H} + |s|^{2H} - |t - s|^{2H}\right\}\mathbb{E}[B_H^2(1)]\nonumber\\
&=&\frac{\sigma^2}{2}\left\{|t|^{2H} + |s|^{2H} - |t - s|^{2H}\right\}
\end{eqnarray}
since $B_H(0) = 0, B_H(1)\stackrel{d}{=}-B_H(-1)$ and $B_H(t)\stackrel{d}{=}|t|^HB_H(\mbox{sign}( t))$ for fixed t.

Now conditioning on $Y$ and using \eqref{momentsofincrement} we arrive at
\begin{eqnarray}
\mathbb{E}[Z(t)Z(s)] &=&  \frac{\sigma^2}{2}\left\{\mathbb{E}[|Y(t)|^{2H}] +\mathbb{E}[|Y(s)|^{2H}] - \mathbb{E}[|Y(t) - Y(s)|^{2H}]\right\}\nonumber\\
&=&\frac{\sigma^2}{2}\left\{U^{2H}(\mbox{min}(t,s)) + 2H\int_{0}^{\mbox{min}(t,s)}U^{2H-1}(\mbox{max}(t,s)-y)dU(y)\right\}.\nonumber
\end{eqnarray}
\end{proof}
\begin{remark}
For $t=s$ using a Laplace properties of a convolution function, the time-changed process $Z(t) = B_H(Y (t))$ has mean zero, its variance is
\begin{equation}\label{varianceofZ}
\mbox{Var}(Z(t)) = \sigma^2 U^{2H}(t),
\end{equation}
its correlation function is
\begin{equation}\mbox{corr}(Z(t), Z(s)) = \frac{\mbox{Cov}\left(Z(t), Z(s)\right)}{\sigma^2 \sqrt{U^{2H}(t)U^{2H}(s)}},\end{equation}
and when $H=1/2$ Brownian motion case the correlation function reduces to
\begin{equation}\mbox{corr}(Z(t),Z(s)) = \frac{U(\mbox{min}(t,s))}{\sqrt{U(t)U(s)}} = \sqrt{\frac{U(\mbox{min}(t,s))}{U(\mbox{max}(t,s))}},\end{equation}
as given in \cite[Remark 2.1]{leonenko2}.
\end{remark}
\begin{remark}\label{covarianceofdifference}
The covariance between far apart increments is given by
\begin{eqnarray}
&&\mbox{Cov}(Z(t)-Z(0), Z(t+v)-Z(v))\nonumber\\ &=& \mathbb{E}\left[Z(t)(Z(t+v)-Z(v))\right]\nonumber\\
&=&\mathbb{E}[Z(t)Z(t+v)] - \mathbb{E}[Z(t)Z(v)]\nonumber\\
&=&\frac{\sigma^2}{2}\left\{U^{2H}(t) + 2H\int_{0}^{t}U^{2H-1}(t+v-y)dU(y)\right\}\nonumber\\
&-&\frac{\sigma^2}{2}\left\{U^{2H}(t) + 2H\int_{0}^{t}U^{2H-1}(v-y)dU(y)\right\}\nonumber\\
&=& \sigma^2 H\int_0^t\left(U^{2H-1}(t+v-y) - U^{2H-1}(v-y)\right)dU(y)\nonumber,
\end{eqnarray}
for $v\ge t.$
\end{remark}
\section{Applications}\label{application}
In this section, we compute the correlation function for several examples. In view of Theorem \ref{maintheorem1}, the main technical issue is the computation of the function $U^k(t).$ For many inverse subordinators, the Laplace exponent $\phi$ can be written explicitly but the inversion to obtain $U^k(t)$ function may be difficult. Below we give examples from applications where the Laplace transform can be inverted analytically and where its asymptotic behavior can be found in order to characterize the behavior of the correlation function of time-changed fractional Brownian motion.
\begin{example}({\bf Inverse $\alpha-$stable subordinator}). Suppose $D(t)$ is standard $\alpha-$stable subordinator with index $0 < \alpha < 1$, so that the Laplace exponent $\phi(s) = s^{\alpha}$ for all $s > 0$. The inverse stable subordinator \eqref{inversesubordinator} has a Mittag-Leffler distribution:
$$\mathbb{E}\left[e^{-\lambda Y(t)}\right] = \sum_{k=0}^{\infty}\frac{(-\lambda t^\alpha)^k}{\Gamma(\alpha n + 1)} = E_\alpha(-\lambda t^\alpha),$$
where $E_\alpha$ is Mittag-Leffler function:
$$E_\alpha(z) = \sum_{k=0}^{\infty}\frac{z^k}{\Gamma(\alpha k + 1)},$$
Bingham \cite{bingham} and Bondesson et al. \cite{bondesson}.

Since $\widetilde{U}^{2H}(\lambda) = \Gamma(2H + 1)/\lambda^{2H\alpha + 1},$ then
\begin{equation}\label{renewalmoments}
U^{2H}(t) = \frac{\Gamma(2H+1)t^{2\alpha H}}{\Gamma(2\alpha H + 1)},
\end{equation}
and the variance of time-changed processes is
\begin{equation}\label{variancefun}
\mbox{Var}(Z(t))= \frac{\sigma^2\Gamma(2H+1)t^{2\alpha H}}{\Gamma(2\alpha H + 1)}.
\end{equation}
For $0< s\leq t,$ substitute \eqref{renewalmoments} into \eqref{maintheoremstatement} to see that the covariance function of
the time-changed processes is
\begin{eqnarray}
&&\mbox{Cov}(Z(t), Z(s))\\ &=& \frac{\sigma^2}{2}\left\{\frac{\Gamma(2H+1)s^{2\alpha H}}{\Gamma(2\alpha H + 1)}
 + \frac{2H\Gamma({2H})}{\Gamma(\alpha)\Gamma(\alpha(2H-1)+1)}\int_{0}^{s}(t-y)^{\alpha(2H-1)}y^{\alpha-1}\,dy\right\}\nonumber\\
&=&\frac{\sigma^2}{2}\left\{\frac{\Gamma(2H+1)s^{2\alpha H}}{\Gamma(2\alpha H + 1)} + \frac{\Gamma({2H+1})t^{2\alpha H}}{\Gamma(\alpha)\Gamma(\alpha(2H-1)+1)}\int_{0}^{s/t}(1-u)^{\alpha(2H-1)}u^{\alpha-1}\,du\right\}\nonumber\\
&=&\frac{\sigma^2}{2}\left\{\frac{\Gamma(2H+1)s^{2\alpha H}}{\Gamma(2\alpha H + 1)} + \frac{\Gamma({2H+1})t^{2\alpha H}}{\Gamma(\alpha)\Gamma(\alpha(2H-1)+1)}B(\alpha,\alpha(2H-1)+1;s/t)\right\},\nonumber
\end{eqnarray}
using a substitution $u = y/t$, where $B(a,b;z) := \int_{0}^z u^{a-1}(1-u)^{b-1}\,du$ is the
incomplete Beta function, and $B(a, b) := \Gamma(a)\Gamma(b)/\Gamma(a + b) = B(a, b; 1)$ is the Beta function. Apply the Taylor series expansion $(1-u)^{b-1} = 1 + O(u)$ as $u\rightarrow 0$ to see that
$$B(a, b;z) = \frac{z^a}{a} + O(z^{a + 1}),\ \ \ \mbox{as}\ \ z\rightarrow 0.$$
Then it follows that for $s > 0$ fixed and $t\rightarrow\infty$ we obtain
\begin{eqnarray}
G(\alpha, H;s, t)&:=& \frac{\Gamma({2H+1})t^{2\alpha H}}{\Gamma(\alpha)\Gamma(\alpha(2H-1)+1)}B(\alpha,\alpha(2H-1)+1;s/t)\nonumber\\
&=&  \frac{\Gamma({2H+1})t^{2\alpha H}}{\Gamma(\alpha +1)\Gamma(\alpha(2H-1)+1)}(s/t
)^\alpha + O((s/t)^{\alpha + 1}),\nonumber
\end{eqnarray}
so that
\begin{equation}\label{covariancefun}
\mbox{Cov}(Z(t), Z(s)) = \frac{\sigma^2}{2}\left\{\frac{\Gamma(2H+1)s^{2\alpha H}}{\Gamma(2\alpha H + 1)} + G(\alpha, H;s, t)\right\},
\end{equation}
where \begin{equation}\label{asymptotic}
G(\alpha, H;s, t)\rightarrow \frac{\Gamma({2H+1})t^{2\alpha H}}{\Gamma(\alpha +1)\Gamma(\alpha(2H-1)+1)}(s/t
)^\alpha,\ \ \ \mbox{as}\ \ t\rightarrow\infty.\end{equation}
Hence
\begin{equation}
\mbox{Cov}(Z(t), Z(s))\rightarrow\frac{\sigma^2}{2}\left\{\frac{\Gamma(2H+1)s^{2\alpha H}}{\Gamma(2\alpha H + 1)} + \frac{\Gamma({2H+1})t^{2\alpha H}}{\Gamma(\alpha +1)\Gamma(\alpha(2H-1)+1)}(s/t
)^\alpha\right\},\nonumber
\end{equation}
as $t\rightarrow \infty.$
From \eqref{variancefun} and \eqref{covariancefun} it follows that for $0<s\leq t$
$$\mbox{corr}(Z(t), Z(s)) = \frac{1}{2}\left\{\left(\frac{s}{t}\right)^{\alpha H} + \frac{\Gamma(2\alpha H + 1)}{\Gamma(2H + 1)}\frac{G(\alpha, H;s, t)}{(ts)^{\alpha H}}\right\},$$
and using \eqref{asymptotic} we have
\begin{equation}\label{longrange}
\mbox{corr}(Z(t), Z(s))\sim \frac{1}{2}\left\{\left(\frac{s}{t}\right)^{\alpha H} + \frac{1}{\alpha B(\alpha, \alpha(2H-1)+1)}\left(\frac{s}{t}\right)^{\alpha(1-H)}\right\},
\end{equation}
as $t\rightarrow\infty.$

When $H = 1/2$ for the special case when the outer processes $B_{1/2}(t)$ is a Brownian motion using  \eqref{variancefun} and \eqref{covariancefun} we get
$$\mbox{Cov}(Z(t), Z(s)) = \frac{\sigma^2}{\Gamma(\alpha+1)}s^\alpha,$$
and $$\mbox{corr}(Z(t),Z(s)) = \left(\frac{s}{t}\right)^{\alpha/2},$$
for $0<s\leq t,$ a formula obtained by Janczura and Wyloma$\acute{\mbox{n}}$ska \cite{Janczura}.

Similarly, by \eqref{momentformula} for $\kappa > 0$ and $0<s\leq t$ we have
$$\mathbb{E}\left[|Y(t) - Y(s)|^\kappa\right] = \frac{\Gamma(\kappa + 1)t^{\alpha\kappa}}{\Gamma(\alpha\kappa + 1)} - \frac{\Gamma(\kappa+1)t^{\alpha\kappa}}{\Gamma(\alpha)\Gamma(\alpha(\kappa-1)+1)}B(\alpha, \alpha(\kappa-1);s/t),$$
and by \eqref{moementofincrementZ(t)} for any positive real number $m$ such that $mH > 0$ we have
\begin{eqnarray}&&\mathbb{E}\left[|Z(t) - Z(s)|^m\right] \nonumber\\
&=& \frac{(2\sigma^2)^{m/2}\Gamma((m+1)/2)\Gamma(mH+1)t^{\alpha mH}}{\sqrt{\pi}}\bigg[\frac{1}{\Gamma(\alpha mH +1)}-\frac{B(\alpha, \alpha(mH-1);s/t)}{\Gamma(\alpha)\Gamma(\alpha(mH-1)+1)}\bigg].\nonumber\end{eqnarray}

Using remark \ref{covarianceofY} we can compute the covariance function of $Y$ for $t\ge s>0$:
\begin{equation}
\mathbb{E}[Y(t)Y(s)] = \frac{s^{2\alpha}}{\Gamma(2\alpha + 1)} + \frac{1}{\Gamma(\alpha)\Gamma(\alpha + 1)}\int_{0}^{s}(t-y)^{\alpha}y^{\alpha-1}dy,
\end{equation}
which implies $\partial_t\partial_s \mathbb{E}[Y(t)Y(s)] = 1/\Gamma^2(\alpha)[s(t-s)]^{1-\alpha}$ as given in Bingham \cite{bingham}.
\begin{eqnarray}
\mbox{Cov}(Y(t), Y(s)) &=& \frac{s^{2\alpha}}{\Gamma(2\alpha + 1)} + \frac{1}{\Gamma(\alpha)\Gamma(\alpha+1)}\int_{0}^{s}(t-y)^{\alpha}y^{\alpha-1}dy - \frac{(ts)^\alpha}{\Gamma(\alpha + 1)^2}\nonumber\\
&=& \frac{s^{2\alpha}}{\Gamma(2\alpha + 1)} + \frac{t^{2\alpha}}{\Gamma(\alpha)\Gamma(\alpha+1)}B(\alpha, \alpha+1;s/t) -\frac{(ts)^\alpha}{\Gamma(\alpha + 1)^2},
\end{eqnarray}
 which coincides with \cite[Eq. 9]{leonenko2}.
 
Finally, using remark \ref{covarianceofdifference} we show the covariance between far apart increments decrease to zeros as the power law. For fixed $t$ and as $v\rightarrow\infty$ we get
\begin{eqnarray}
&&\mbox{Cov}(Z(t) - Z(0), Z(t+v) - Z(v))\nonumber\\
&=&\frac{\sigma^2H\Gamma(2H)}{\Gamma(\alpha)\Gamma(\alpha(2H-1)+1)}\displaystyle\int_{0}^{t}\left[(t+v-y)^{\alpha(2H-1)}-(v-y)^{\alpha(2H-1)}\right]y^{\alpha-1}\, dy\nonumber\\
&=&\frac{\sigma^2H\Gamma(2H)v^{\alpha(2H-1)-1}}{\Gamma(\alpha)\Gamma(\alpha(2H-1)+1)}\displaystyle\int_{0}^{t}v\left[\left(1 + \frac{t-y}{v}\right)^{\alpha(2H-1)} - \left(1-\frac{y}{v}\right)^{\alpha(2H-1)}\right]y^{\alpha-1}\,dy\nonumber\\
&\sim&\frac{\sigma^2/2\Gamma(2H+1)\alpha(2H-1)v^{\alpha(2H-1)-1}}{\Gamma(\alpha)\Gamma(\alpha(2H-1) + 1)}\displaystyle\int_0^tty^{\alpha-1}\, dy\nonumber\\
&=&\frac{\sigma^2/2\Gamma(2H+1)t^{\alpha+1}}{\Gamma(\alpha + 1)\Gamma(\alpha(2H-1))}v^{\alpha(2H-1)-1}, \ \mbox{as}\ v\rightarrow\infty.
\end{eqnarray}

In summary, the correlation function of $Z(t)$ decays like a mixture of power law $t^{-\alpha H} + t^{-\alpha(1-H)}$. The non-stationarity time-changed process $Z(t)$ exhibits long range dependence (lack of summability of correlation). The covariance function between far apart increments of process $\mbox{Cov}[Z(t), Z(t+v)-Z(t)]$ decays like a power law $v^{-(1 + \alpha(1-2H))}$ which shows long-range dependence for $1/2 < H < 1$. Similar long range dependent behavior has been obtained for time-changed fractional Pearson diffusion \cite{leonenko2, jebessa2}.  
\end{example}
\begin{example}({\bf Inverse stable mixture}). Now consider a mixture of standard $\alpha-$stable subordinators with Laplace exponent
$$\Phi(\lambda) =  \int_{0}^{1}q(w)\lambda^w\,dw = \int_{0}^{\infty}(1 - e^{-\lambda x})l_q(x)\,dx,$$
where $q(w)$ is a probability density on $(0,1)$, and the density $l_q(x)$ of the L$\acute{\mbox{e}}$vy measure is given by
\begin{equation}
l_q(x) = \int_{0}^{1}\frac{wx^{-w-1}}{\Gamma(1-w)}q(w)\,dw.
\end{equation}
Such mixtures are used in time-fractional models of accelerating subdiffusion, see Mainardi et al. \cite{mainardi} and Chechkin et al. \cite{chechkin}. They can also be used to model ultraslow diffusion, see Sokolov et al. \cite{sokolov}, Meerschaert and Scheffler \cite{mark3}, and Kov$\acute{\mbox{a}}$cs and Meerschaert \cite{kovac}.

The $\alpha-$stable subordinator corresponds to the choice $q(w) = \delta(w-\alpha)$ where $\delta(\cdot)$ is the delta function. The model
$$q(w) = C_1\delta(w-\alpha_1) + C_2\delta(w-\alpha_2), \ \ C_1 + C_2 = 1,$$
with $\alpha_1 < \alpha_2$ was considered in Chechkin et al. \cite{chechkin}. The subordinator $D$ in this case is the linear combination of two independent stable subordinators with $\Phi(\lambda) = C_1\lambda^{\alpha_1} + C_2\lambda^{\alpha_2},$ so that
\begin{equation}\label{laplacetransformofMittag}
\widetilde{U}^k(\lambda) = \frac{\Gamma(k +1)}{\lambda(C_1\lambda^{\alpha_1} + C_2\lambda^{\alpha_2})^k} = \frac{\Gamma(k+1)\lambda^{-\alpha_2 k - 1}}{C_2^{k}\left(1 + \frac{C_1}{C_2}\lambda^{-(\alpha_2-\alpha_1)}\right)^k}.
\end{equation}
In order to invert analytically the Laplace transform \eqref{laplacetransformofMittag}, we use the well-known expression of the Laplace transform of the generalized Mittag-Leffler function (see \cite{saxena}, eq. 9), i.e.

\begin{equation}\label{laplaceofGMF}
\mathcal{L}(t^{\gamma-1}E^{\delta}_{\beta,\gamma}(\omega t^{\beta});\lambda)=\lambda^{-\gamma}\left(1-\omega \lambda^{-\beta}\right)^{-\delta},
\end{equation}
where $\mbox{Re}(\beta)>0, \mbox{Re}(\gamma)>0, \mbox{Re}(\delta)>0$ and $\lambda>|\omega|^{\frac{1}{Re(\beta)}}.$ The Generalized Mittag-Leffler (GML) function is defined as
\begin{equation}\label{GMLfunction1}
E^{\gamma}_{\alpha,\beta}(z) = \displaystyle\sum_{j=0}^{\infty}\frac{(\gamma)_jz^j}{j!\Gamma(\alpha j + \beta)},\ \ \ \alpha, \beta\in \mathbb{C}, Re(\alpha), Re(\beta), Re(\gamma)>0,
\end{equation}
where $(\gamma)_j=\gamma(\gamma + 1)\cdots (\gamma+j-1)$ (for $j=0,1,\ldots,\ \mbox{and}\ \gamma\neq 0$) is the Pochammer symbol and $(\gamma)_0=1.$ It is an entire function of order $\rho = [\mathcal{R}(\alpha)]^{-1}.$ When $\gamma = 1$ \eqref{GMLfunction1} reduces to the Mittag-Leffler function
\begin{equation}
E_{\alpha,\beta}(z) = \displaystyle\sum_{j=0}^{\infty}\frac{z^j}{\Gamma(\alpha j + \beta)}.
\end{equation}
Setting $\gamma = \alpha_2k + 1, \delta = k, \beta = \alpha_2 - \alpha_1$ and $\omega = -C_1/C_2$ we get
\begin{equation}\label{genrenwalfunformix}
U^k(t) = \frac{\Gamma(k + 1)}{C_2^k}t^{\alpha_2k}E^{k}_{\alpha_2-\alpha_1,\alpha_2k + 1}\left(-C_1t^{\alpha_2-\alpha_1}/C_2\right).
\end{equation}
For $k=1$ we have
$$\mathbb{E}[Y(t)] = U(t) = \frac{1}{C_2}t^{\alpha_2}E_{\alpha_2-\alpha_1,\alpha_2 + 1}\left(-C_1t^{\alpha_2-\alpha_1}/C_2\right).$$
Then \eqref{varianceofZ} implies that the time-changed process $Z(t) = B_H(Y(t))$ has variance
$$\mbox{Var}(Z(t)) = \frac{\sigma^2\Gamma(2H + 1)}{C_2^{2H}}t^{2\alpha_2H}E^{2H}_{\alpha_2-\alpha_1,2\alpha_2H + 1}\left(-C_1t^{\alpha_2-\alpha_1}/C_2\right).$$
We use the properties of generalized Mittag-Leffler function to obtain asymptotic expansion of the variance. From \cite[Eq. 2.59, p.15]{beghin} we obtain
\begin{equation}
\mbox{Var}(Z(t)) = \frac{\sigma^2\Gamma(2H+1)t^{2\alpha_1H}}{C_1^{2H}\Gamma(2\alpha_1H+1)} + o(t^{2\alpha_1H}),\ \ \ \mbox{as}\ \ t\rightarrow\infty.
\end{equation}
The asymptotic behavior for small $t$ can be deduced directly by the series expansion of \eqref{GMLfunction1}: indeed we get
\begin{equation}
\mbox{Var}(Z(t)) = \frac{\sigma^2\Gamma(2H+1)t^{2\alpha_2H}}{C_2^{2H}\Gamma(2\alpha_2H+1)}+ O\left(t^{\alpha_2(2H+1)-\alpha_1}\right),\ \ \mbox{as}\ \ t\rightarrow 0.
\end{equation}
For $0<s\leq t$ using \eqref{genrenwalfunformix} and \cite[Eq.2.59, p.15]{beghin} we have
\begin{eqnarray}
U^{2H-1}(t(1-sy/t))\simeq \frac{\Gamma(2H)t^{\alpha_1(2H-1)}(1-sz/t)^{\alpha_1(2H-1)}}{C_1^{2H-1}\Gamma(\alpha_1(2H-1)+1)},\ \ \mbox{as}\ \ t\rightarrow\infty\ \ \mbox{for}\ \ z\in[0,1].\nonumber
\end{eqnarray}
For a fixed $s>0$ and $t\rightarrow\infty$ we get
\begin{eqnarray}
\mbox{Cov}(Z(t),Z(s)) &=& \frac{\sigma^2}{2}\left\{U^{2H}(s) + 2Hs\int_{0}^{1}U^{2H-1}(t(1-sz/t))U^{'}(sz)dz\right\}\nonumber\\
&\sim&
\frac{\sigma^2}{2}\left\{U^{2H}(s) + \frac{\Gamma(2H+1)t^{\alpha_1(2H-1)}}{C_1^{2H-1}\Gamma(\alpha_1(2H-1)+1)}s\int_{0}^{1}U^{'}(sz)dz\right\}\nonumber\\
&=&\frac{\sigma^2}{2}\left\{U^{2H}(s) + \frac{\Gamma(2H+1)t^{\alpha_1(2H-1)}}{C_1^{2H-1}\Gamma(\alpha_1(2H-1)+1)}U(s)\right\}.
\end{eqnarray}
When $t$ is fixed and $s\rightarrow 0,$ then
\begin{equation}
\mbox{Cov}(Z(t), Z(s))\sim \frac{\sigma^2\Gamma(2H+1)s^{2\alpha_2H}}{2C_2^{2H}\Gamma(2\alpha_2H+1)}.
\end{equation}
\end{example}
\begin{example}{\bf (Inverse tempered stable subordinator).}

The standard tempered stable subordinator $D(t)$ with $0<\alpha<1$ is a L$\acute{\mbox{e}}vy$ process with tempered stable increments \cite{baeumer2, Rosinski}. The L$\acute{\mbox{e}}vy$ measure of the unit increment is
$$v(dx) = \frac{\alpha}{\Gamma(1-\alpha)}x^{-\alpha-1}e^{-\lambda}, x >0,$$
and then 
$$\mathbb{E}[e^{-\lambda D(t)}] = e^{-t\Phi(\lambda)} = \mbox{exp}\{-t((a + \lambda)^{\alpha} - a^{\alpha})\},$$
(see \cite[Section 7.2]{meerschaert-skorskii-book}).

Since $\widetilde{U}(\lambda) = 1/\lambda((a + \lambda)^\alpha - a^\alpha)$ using $$\frac{1}{(a + \lambda)^\alpha - a^\a} = \sum_{n=0}^{\infty}a^{\alpha n}(a + \lambda)^{-\alpha(1 + n)},$$
 we have
\begin{equation}\label{renewalfunctionoftempered}
\widetilde{U}(\lambda) = \sum_{n=0}^{\infty}\frac{a^{\alpha n}}{\lambda^{1 + \alpha(1 + n)}(1 + a\lambda^{-1})^{\alpha(1 + n)}}.
\end{equation}
Hence, using \eqref{laplaceofGMF} the renewal function
\begin{eqnarray}
U(t) = \mathbb{E}[Y(t)] &=& \sum_{n=0}^{\infty}a^{\alpha n}t^{\alpha(1 + n)}E^{\alpha(1+n)}_{1, \alpha(1 + n) + 1}(-at)\label{renewalfun}\\
&=& \sum_{n=0}^{\infty}a^{\alpha n}t^{\alpha(1 + n)}\frac{M(\alpha(1+n),\alpha(1 + n) + 1; -at)}{\Gamma(\alpha(1+n)+1)}\nonumber\\
&=& \sum_{n=0}^{\infty}a^{\alpha n}t^{\alpha(1 + n)}\frac{\alpha(1+n)(at)^{-\alpha(1+n)}\gamma(at;\alpha(1+n))}{\Gamma(\alpha(1+n)+1)}\nonumber\\
&=&\sum_{n=0}^{\infty}\frac{a^{-\alpha}\gamma(at;\alpha(1+n))}{\Gamma(\alpha(1 + n))}
=\sum_{n=0}^{\infty}a^{-\alpha}P(at,\alpha(1+n)),\nonumber
\end{eqnarray}
where $M(a,b;x) = \sum_{k=0}^{\infty}\frac{(a)_kx^k}{(b)_kk!}$ is Kummer's confluent hypergeometric function, $\gamma(x;v) = \int_{0}^{x}e^{-t}t^{v-1}\, dt$ is incomplete gamma function and $P(x/\theta,\beta)$ is the commutative distribution function for Gamma random variables with shape parameter $\beta$ and scale parameter $\theta$. We used the fact $M(a, a+1; -x) = ax^{-a}\gamma(x;a)$ in the above simplification \cite[see eq. 13.6.10]{Abramowitz}.

For $a=0$ in \eqref{renewalfun}, we have $$U(t) = \frac{t^\alpha}{\Gamma(1+\alpha)},$$
which is the renewal function for inverse $\alpha-$stable subordinator.

When $H=1/2,$ the Brownian motion case we get
\begin{equation}
\mbox{Var}(Z(t)) = \sigma^2U(t) = \sigma^2a^{-\alpha}\sum_{n=0}^{\infty}\frac{\gamma(at;\alpha(1+n))}{\Gamma(\alpha(1 + n))}.
\end{equation}
Since $\gamma(x;a) \sim x^a/a$ as $x\rightarrow 0,$ it follows:
\begin{eqnarray}
U(t) &\sim& t^{\alpha}\sum_{n=0}^{\infty}\frac{(at)^{\alpha n}}{\Gamma(1+\alpha(1+n))}\nonumber\\
&=& \frac{t^\alpha}{\Gamma(1+\alpha)} + O(t^{2\alpha}),\ \ \ \mbox{as}\ \ \ t\rightarrow 0,
\end{eqnarray}
and for $H=1/2$
\begin{equation}
\mbox{Var}(Z(t))\sim \frac{\sigma^2t^\alpha}{\Gamma(1+\alpha)} + O(t^{2\alpha}),\ \ \ \mbox{as}\ \ \ t\rightarrow 0.
\end{equation}

When $t$ is fixed and $s\rightarrow 0,$ then for $H=1/2$
\begin{equation}
\mbox{corr}[Z(t), Z(s)]=\sqrt{\frac{U(s)}{U(t)}}\sim \frac{s^{\alpha/2}}{\sqrt{\Gamma(1+\alpha)U(t)}} + O(s^\alpha).
\end{equation}
When $\lambda\rightarrow 0,$ the Laplace exponent $\Phi(\lambda) = (a + \lambda)^\a - a^\a\sim \a a^{\a -1}\lambda$ as $\lambda\rightarrow 0,$ and hence for $k>0$
$$\widetilde{U}^k(\lambda) = \frac{\Gamma(k + 1)}{\lambda\left((a + \lambda)^\a -a^\a\right)^k}\sim \frac{a^{(1-\a)k}}{\alpha^k}\Gamma(k+1)\lambda^{-1-k},\ \ \ \mbox{as}\ \ \lambda\rightarrow 0.$$
\end{example}
The Karamata Tauberian theorem \cite[Theorem 4, p.446]{feller} implies
\begin{equation}\label{gennthmomentofrenwal}U^k(t)\sim \frac{t^k}{\alpha^ka^{(\alpha-1)k}},\ \ \ \mbox{as}\ \ t\rightarrow\infty.\end{equation}
Hence the variance function of the process $Z(t)$ behaves as follows:
\begin{equation}
\mbox{Var}(Z(t)) = \sigma^2U^{2H}(t)\sim\frac{\sigma^2}{\alpha^{2H}a^{2H(\alpha-1)}}t^{2H}, \ \ \ \mbox{as}\ \ t\rightarrow\infty.
\end{equation}
For $0< s\leq t,$ using \eqref{gennthmomentofrenwal} and dominated convergence theorem we get
\begin{eqnarray}
\int_{0}^{s}U^{2H-1}(t-y)dU(y)&\sim&\frac{a^{(1-\alpha)(2H-1)}}{\alpha^{2H-1}}\int_{0}^{s}(t-y)^{2H-1}U'(y)dy\nonumber\\
&\sim&\frac{a^{(1-\alpha)(2H-1)}t^{2H-1}U(s)}{\alpha^{2H-1}},\ \ \mbox{as}\ \ t\rightarrow\infty.\nonumber
\end{eqnarray}
From \eqref{maintheoremstatement} it follows that for $0<s\leq t$
\begin{equation}
\mbox{Cov}(Z(t), Z(s))\sim \frac{\sigma^2}{2}\left\{U^{2H}(s) + \frac{2H a^{(1-\alpha)(2H-1)}t^{2H-1}U(s)}{\alpha^{2H-1}}\right\},\end{equation}
as $t\rightarrow\infty$, and hence
\begin{eqnarray}
\mbox{corr}(Z(t), Z(s)) &=& \frac{\mbox{Cov}(Z(t), Z(s))}{\mbox{Var}(Z(t)\mbox{Var}(Z(s))}\\
&\sim&\frac{1}{2}\left\{\alpha^{H}a^{(\alpha-1)H}\sqrt{U^{2H}(s)}\frac{1}{t^H} + \frac{2H\alpha^{1-H}a^{(\alpha-1)(1-H)}U(s)}{\sqrt{U^{2H}(s)}}\frac{1}{t^{1-H}}\right\}\nonumber
,\end{eqnarray}
as $t\rightarrow\infty.$

\section*{Acknowledgments} I would like to thank Erkan Nane for the careful reading of the paper and comments which considerably improved paper.

\end{document}